\providecommand{\U}[1]{\protect\rule{.1in}{.1in}}
\newtheorem{theorem}{Theorem}
\newtheorem{corollary}[theorem]{Corollary}
\newtheorem{lemma}[theorem]{Lemma}
\newtheorem{remark}[theorem]{Remark}
\begin{document}

\title[Global comparison principles for the $p$-Laplace operator]{Global comparison principles for the $p$-Laplace operator on Riemannian manifolds}
\date{\today}

\author{Stefano Pigola}
\address{Dipartimento di Fisica e Matematica\\
Universit\`a dell'Insubria - Como\\
via Valleggio 11\\
I-22100 Como, ITALY}
\email{stefano.pigola@uninsubria.it}

\author{Giona Veronelli}
\address{Dipartimento di Matematica\\
Universit\`a di Milano\\
via Saldini 50\\
I-20133 Milano, ITALY}
\email{giona.veronelli@unimi.it}

\subjclass[2000]{35B05, 31C45}

\keywords{Nonlinear comparison, finite $p$-energy, $p$-parabolicity}
\begin{abstract}
We prove global comparison results for the $p$-Laplacian on a $p$-parabolic manifold.
These involve both real-valued and vector-valued maps with finite $p$-energy.
\end{abstract}

\maketitle

\section{Introduction}

Let $\left(  M,\left\langle ,\right\rangle \right)  $ be a connected,
$m$-dimensional manifold and let $p>1$. Recall that the $p$-Laplacian of a
real valued function $u:M\rightarrow\mathbb{R}$ is defined by $\Delta
_{p}u=\operatorname{div}\left(  \left\vert \nabla u\right\vert ^{p-2}\nabla
u\right).$ The function $u$ is said to be $p$-subharmonic if $\Delta
_{p}u\geq0$. In case any bounded above, $p$-subharmonic function is
necessarily constant we say that the manifold $M$ is $p$-parabolic. It is
known that $p$-parabolicity of a complete manifold $M$ is related to volume
growth properties of the underlying manifold. Namely, $M$ is $p$-parabolic
provided, for some $x\in M$,%
\[
\left(  \frac{1}{\mathrm{vol}_{m-1}\partial B_{r}\left(  x\right)  }\right)
^{\frac{1}{p-1}}\notin L^{1}\left(  +\infty\right)  ,
\]
where $\partial B_{r}\left(  x\right)  $ denotes the metric sphere centered at
$x$, of radius $r>0$, and $\mathrm{vol}_{m-1}$ is the $\left(  m-1\right)
$-dimensional Hausdorff measure; \cite{RS}. Thus, for instance, the standard
Euclidean space $\mathbb{R}^m$ is $p$-parabolic if $m \leq p$.

Now, suppose that $M$ is $p$-parabolic, with $p\geq2$. It is known,
\cite{PRS-MathZ}, that a $p$-subharmonic function $u:M\rightarrow\mathbb{R}$
with finite $p$-energy $\left\vert \nabla u\right\vert \in L^{p}\left(
M\right)  $ must be constant. We shall show that this is nothing but a very
special case of a genuine comparison principle for the $p$-Laplace operator.

\begin{theorem}
\label{th_comparison}Let $\left(  M,\left\langle ,\right\rangle \right)  $ be
a connected, $p$-parabolic manifold, $p\geq2$. Assume that the smooth
functions $u,v:M\rightarrow\mathbb{R}$ satisfy%
\[
\Delta_{p}u\geq\Delta_{p}v\text{ on }M,
\]
and%
\[
\left\vert \nabla u\right\vert ,\left\vert \nabla v\right\vert \in
L^{p}\left(  M\right)  .
\]
Then, $u=v+A$ on $M$, for some constant $A\in\mathbb{R}$.
\end{theorem}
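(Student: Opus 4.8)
The plan is to reduce the statement to the pointwise identity $\nabla u=\nabla v$ and to establish this through a single integral estimate in which the two hypotheses—finite $p$-energy and $p$-parabolicity—enter in a complementary way.

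First I would set $w=u-v$, $X=\left\vert \nabla u\right\vert ^{p-2}\nabla u$, $Y=\left\vert \nabla v\right\vert ^{p-2}\nabla v$, and $W=X-Y$, so that the hypothesis $\Delta_p u\geq\Delta_p v$ reads $\operatorname{div}W\geq0$ in the weak sense. The crucial pointwise tool is the monotonicity inequality valid for $p\geq2$, namely $\langle |a|^{p-2}a-|b|^{p-2}b,\,a-b\rangle\geq 2^{2-p}|a-b|^{p}$ for all $a,b$ in a fixed tangent space. Applied to $a=\nabla u$, $b=\nabla v$ it gives $\langle W,\nabla w\rangle\geq 2^{2-p}|\nabla w|^{p}\geq0$ pointwise. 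Moreover, since $|W|\leq\left\vert \nabla u\right\vert ^{p-1}+\left\vert \nabla v\right\vert ^{p-1}\in L^{p/(p-1)}(M)$ while $|\nabla w|\in L^{p}(M)$ by the finite energy assumption, H\"older's inequality shows $\langle W,\nabla w\rangle\in L^{1}(M)$. Thus it suffices to prove $\int_M\langle W,\nabla w\rangle=0$: the nonnegative integrand then vanishes a.e., forcing $\nabla w=0$ a.e., and $w$ is constant by connectedness.

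To prove this I would use the weak formulation of $\operatorname{div}W\geq0$: for every nonnegative, compactly supported Lipschitz $\phi$ one has $\int_M\langle W,\nabla\phi\rangle\leq0$. The obstacle is that $w$ itself has no sign and so cannot serve as a test function. I would circumvent this by testing against the nonnegative truncations $w_k^{+}=\min(\max(w,0),k)$ multiplied by a sequence $\varphi_n$ realizing $p$-parabolicity through its capacity/cutoff description: $0\leq\varphi_n\leq1$, $\operatorname{supp}\varphi_n$ compact, $\varphi_n\to1$ pointwise, and $\|\nabla\varphi_n\|_{L^p}\to0$. Choosing $\phi=\varphi_n^{p}\,w_k^{+}$ and expanding $\nabla\phi=\varphi_n^{p}\nabla w_k^{+}+p\varphi_n^{p-1}w_k^{+}\nabla\varphi_n$ yields
\[
\int_M\varphi_n^{p}\,\langle W,\nabla w\rangle\,\mathbf 1_{\{0<w<k\}}\;\leq\; -\,p\int_M\varphi_n^{p-1}\,w_k^{+}\,\langle W,\nabla\varphi_n\rangle .
\]
The left-hand integrand is nonnegative; for the right-hand side I would bound $w_k^{+}\leq k$ and $\varphi_n^{p-1}\leq1$ and apply H\"older to obtain the estimate $pk\,\|W\|_{L^{p/(p-1)}}\,\|\nabla\varphi_n\|_{L^p}$, which tends to $0$ as $n\to\infty$ precisely because $W\in L^{p/(p-1)}$ and the $\varphi_n$ have vanishing $p$-energy. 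Letting $n\to\infty$ via Fatou (using $\varphi_n\to1$ and nonnegativity) gives $\int_M\langle W,\nabla w\rangle\,\mathbf 1_{\{0<w<k\}}\leq0$, hence $=0$; then $k\to\infty$ by monotone convergence yields $\int_{\{w>0\}}\langle W,\nabla w\rangle=0$, so $\nabla w=0$ a.e. on $\{w>0\}$. The symmetric choice $\phi=\varphi_n^{p}\min(\max(-w,0),k)$ gives $\nabla w=0$ a.e. on $\{w<0\}$, while $\nabla w=0$ a.e. on $\{w=0\}$ by the standard fact on gradients along level sets. Combining, $\nabla w\equiv0$, that is, $u=v+A$.

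I expect the main difficulty to be the sign indefiniteness of $w$, resolved by the nonnegative truncation together with the cutoff formulation of parabolicity; the quantitative control of the cross term is exactly where finite $p$-energy (to place $W$ in $L^{p/(p-1)}$) and $p$-parabolicity (to annihilate $\|\nabla\varphi_n\|_{L^p}$) cooperate. A secondary point requiring care is the admissibility of the truncated Lipschitz test functions and the equivalence between the Liouville-type definition of $p$-parabolicity adopted in the introduction and the capacity/cutoff characterization invoked above.
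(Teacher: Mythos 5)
Your strategy is genuinely different from the paper's (which forms the single vector field $X=\alpha(u-v)\{|\nabla u|^{p-2}\nabla u-|\nabla v|^{p-2}\nabla v\}$ with $\alpha$ a bounded, \emph{nonnegative and nondecreasing} piecewise-linear function, and concludes $\operatorname{div}X=0$ from the Gol'dshtein--Troyanov Kelvin--Nevanlinna--Royden criterion, finishing with a connectedness argument), and most of your Caccioppoli-type version is sound: the monotonicity inequality with constant $2^{2-p}$ is the standard one for $p\ge2$, the $L^{p/(p-1)}$ bound on $W$, the admissibility of the Lipschitz test functions, and the positive-part step with $\phi=\varphi_n^{p}w_k^{+}$ together with the Fatou and monotone convergence limits all work as you describe. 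The equivalence you flag between the Liouville definition of $p$-parabolicity and the capacity/cutoff characterization is standard and not a real obstacle.

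However, the step on $\{w<0\}$ fails as written. Put $g=\min(\max(-w,0),k)$, so $\nabla g=-\nabla w\,\mathbf{1}_{\{-k<w<0\}}$ a.e. Testing $\operatorname{div}W\ge0$ against $\phi=\varphi_n^{p}g$ gives
\[
-\int_{\{-k<w<0\}}\varphi_n^{p}\,\langle W,\nabla w\rangle\;\le\;-\,p\int_M\varphi_n^{p-1}g\,\langle W,\nabla\varphi_n\rangle\;\le\;pk\,\|W\|_{L^{p/(p-1)}}\,\|\nabla\varphi_n\|_{L^{p}},
\]
i.e.\ a \emph{lower} bound, tending to $0$, on the integral of the nonnegative quantity $\langle W,\nabla w\rangle$ over $\{-k<w<0\}$ --- which is vacuous and yields no information there. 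The defect is structural: to exploit $\operatorname{div}W\ge0$ the test function must be both nonnegative and a \emph{nondecreasing} function of $w$, whereas $\min(\max(-w,0),k)$ is nonincreasing in $w$. The repair is a shifted truncation: take $\phi=\varphi_n^{p}\bigl(k-\min(\max(-w,0),k)\bigr)$, or more efficiently $\phi=\varphi_n^{p}\bigl(\min(\max(w,-k),k)+k\bigr)$, which is nonnegative, bounded by $2k$, and nondecreasing in $w$; the same expansion then gives $\int_{\{|w|<k\}}\varphi_n^{p}\langle W,\nabla w\rangle\le 2pk\,\|W\|_{L^{p/(p-1)}}\|\nabla\varphi_n\|_{L^{p}}$, handling both signs at once, and your limit arguments in $n$ and $k$ then complete the proof. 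Note that this corrected truncation is precisely the paper's device: its $\alpha$ is chosen nonnegative with $\alpha'\ge0$ exactly so that both terms of $\operatorname{div}X$ have a usable sign.
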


Besides real-valued functions one is naturally led to consider manifold-valued
maps. Several topological questions are related to the $p$-Laplacian of maps;
\cite{Wei}. Recall that the $p$-Laplacian (or the $p$-tension field) of a map
$u:M\rightarrow N$ between Riemannian manifolds is defined by%
\[
\Delta_{p}u=\operatorname{div}\left(  \left\vert du\right\vert ^{p-2}%
du\right)  ,
\]
Here, $du\in T^{\ast}M\otimes u^{-1}TN$ denotes the differential of $u$ and the
bundle $T^{\ast}M\otimes u^{-1}TN$ is endowed with its Hilbert-–Schmidt scalar
product $\left\langle ,\right\rangle$. Moreover, $-\operatorname{div}$ stands for the formal adjoint of the
exterior differential $d$ with respect to the standard $L^{2}$ inner product
on vector-valued $1$-forms. Say that $u$ is $p$-harmonic if $\Delta_{p}u=0$.
In \cite{SY}, Schoen and Yau prove a general comparison principle for
homotopic ($2$-)harmonic maps with finite ($2$-)energy into non-positively
curved targets. They assume that the complete, non compact manifold $M$ has
finite volume but the request that $M$ is ($2$-)parabolic suffices,
\cite{PRS-MathZ}. In this direction, comparisons for homotopic \thinspace
$p$-harmonic maps with finite $p$-energy into non-positively curved manifolds
are far from being completely understood. Some progress in the special
situation of a single map homotopic to a constant has been made in
\cite{PRS-MathZ}. In this note, we focus our attention on the case
$N=\mathbb{R}^{n}$. According to \cite{PRS-MathZ}, it is clear that, if $M$ is
$p$-parabolic, then every $p$-harmonic map $u:M\rightarrow\mathbb{R}^n$ with
finite $p$-energy $\left\vert du\right\vert \in L^{p}\left(  M\right)  $ must
be constant. However, using the very special structure of $\mathbb{R}^{n}$, we
are able to extend this conclusion, thus establishing a comparison principle
for maps $u,v:M\rightarrow\mathbb{R}^{n}$ having the same $p$-Laplacian. This
can be considered as a further step towards the comprehension of the general
comparison problem alluded to above.

\begin{theorem}
\label{th_rn} Suppose that $\left(  M,\left\langle ,\right\rangle \right)  $
is $p$-parabolic, $p\geq2$. Let $u,v:M\rightarrow\mathbb{R}^{n}$ be smooth
maps satisfying%
\begin{equation}
\Delta_{p}u=\Delta_{p}v\text{ on }M, \label{assumption_pDelta}%
\end{equation}
and%
\[
\left\vert du\right\vert ,\left\vert dv\right\vert \in L^{p}\left(  M\right)
.
\]
If $\left(  M,\left\langle ,\right\rangle \right)  $ is
$p$-parabolic then $\ u=v+A\mathrm{,}$ for some constant $A\in\mathbb{R}$.
\end{theorem}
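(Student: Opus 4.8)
The plan is to reduce the statement to showing that $w:=u-v$ has vanishing differential, and to extract this from a single integrated monotonicity identity, using $p$-parabolicity only through the existence of good cutoff functions. Since the target is flat, I identify $du$ with $(\nabla u^{1},\dots,\nabla u^{n})$ and set $Z^{\alpha}:=|du|^{p-2}\nabla u^{\alpha}-|dv|^{p-2}\nabla v^{\alpha}$, so that assumption \eqref{assumption_pDelta} reads $\operatorname{div}Z^{\alpha}=0$ for every $\alpha$. The decisive elementary computation is that for any $C^{1}$ map $f=(f^{\alpha}):M\rightarrow\mathbb{R}^{n}$ the vector field $F:=\sum_{\alpha}f^{\alpha}Z^{\alpha}$ satisfies $\operatorname{div}F=\sum_{\alpha}\langle\nabla f^{\alpha},Z^{\alpha}\rangle$, the zeroth-order terms dropping out precisely because $\operatorname{div}Z^{\alpha}=0$. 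Taking $f=w$ would make the right-hand side equal to the Hilbert--Schmidt pairing $T:=\langle|du|^{p-2}du-|dv|^{p-2}dv,\,du-dv\rangle=\sum_{\alpha}\langle Z^{\alpha},\nabla w^{\alpha}\rangle$, and the monotonicity inequality for $p\geq2$, namely $\langle|a|^{p-2}a-|b|^{p-2}b,\,a-b\rangle\geq2^{2-p}|a-b|^{p}$ applied with $a=du$, $b=dv$, gives $T\geq2^{2-p}|dw|^{p}\geq0$. Thus it suffices to prove $\int_{M}T=0$.

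The obstruction, and the only real difficulty, is that $w$ is a priori unbounded, so $f=w$ cannot be paired against cutoffs whose gradient is merely $L^{p}$-controlled. I therefore replace $w$ by the bounded radial truncations $f_{L}:=w/\sqrt{1+|w|^{2}/L^{2}}$, for which $|f_{L}|\leq L$ and $\nabla f_{L}^{\alpha}=\psi_{L}(\rho)\nabla w^{\alpha}+2\psi_{L}'(\rho)\,w^{\alpha}\sum_{\beta}w^{\beta}\nabla w^{\beta}$, where $\rho:=|w|^{2}$ and $\psi_{L}(s):=(1+s/L^{2})^{-1/2}$. Writing $W:=\sum_{\alpha}w^{\alpha}Z^{\alpha}$ and $S:=\tfrac12\nabla\rho$, the field $F_{L}:=\sum_{\alpha}f_{L}^{\alpha}Z^{\alpha}$ then obeys $\operatorname{div}F_{L}=\psi_{L}(\rho)\,T+2\psi_{L}'(\rho)\langle W,S\rangle=:G_{L}$. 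Because $p\geq2$ the map $\xi\mapsto|\xi|^{p-2}\xi$ is $C^{1}$, so the $Z^{\alpha}$, hence $F_{L}$, are $C^{1}$ and $G_{L}$ is continuous. Moreover $|Z|\leq|du|^{p-1}+|dv|^{p-1}$ together with the finite-energy hypothesis and $(p-1)p'=p$, where $p'=p/(p-1)$, gives $Z\in L^{p'}(M)$; then, using $|W|\leq|w|\,|Z|$ and $|S|\leq|w|\,|dw|$, both $\psi_{L}(\rho)T$ and the cross term $\psi_{L}'(\rho)\langle W,S\rangle$ are dominated by a fixed multiple of $|Z|\,|dw|\in L^{1}(M)$, uniformly in $L$.

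Now I invoke the classical characterization of $p$-parabolicity by vanishing $p$-capacity: there is a sequence of compactly supported cutoffs $\varphi_{k}$ with $0\leq\varphi_{k}\leq1$, $\varphi_{k}\rightarrow1$ pointwise, and $\|\nabla\varphi_{k}\|_{L^{p}(M)}\rightarrow0$. For fixed $L$, integration by parts gives $\int_{M}\varphi_{k}\,G_{L}=-\int_{M}\langle\nabla\varphi_{k},F_{L}\rangle$, and H\"older yields $|\int_{M}\langle\nabla\varphi_{k},F_{L}\rangle|\leq\|\nabla\varphi_{k}\|_{p}\,\|F_{L}\|_{p'}\leq L\|\nabla\varphi_{k}\|_{p}\,\|Z\|_{p'}\rightarrow0$ as $k\rightarrow\infty$; since $G_{L}\in L^{1}$ and $\varphi_{k}\rightarrow1$, dominated convergence makes the left-hand side converge to $\int_{M}G_{L}$, so $\int_{M}G_{L}=0$ for every $L$. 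Finally I let $L\rightarrow\infty$: the cross-term integrand is bounded by $\tfrac12\,t(1+t)^{-3/2}|Z|\,|dw|$ with $t=\rho/L^{2}\rightarrow0$, hence tends to $0$ pointwise while remaining dominated by $|Z|\,|dw|$, so $\int_{M}\psi_{L}'(\rho)\langle W,S\rangle\rightarrow0$ by dominated convergence, whereas $\psi_{L}(\rho)\uparrow1$ forces $\int_{M}\psi_{L}(\rho)T\uparrow\int_{M}T$ by monotone convergence. Passing to the limit in $\int_{M}G_{L}=0$ then gives $\int_{M}T=0$, and since $T\geq2^{2-p}|dw|^{p}\geq0$ this forces $dw\equiv0$; as $M$ is connected, $w=u-v$ equals a constant $A\in\mathbb{R}^{n}$, which is the assertion. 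The step demanding care is exactly the passage to the limit in $L$, i.e. verifying that the bounded truncation leaves no uncontrolled remainder; everything else is the standard interplay between the monotonicity inequality, the finite-energy bound $Z\in L^{p'}$, and the parabolic cutoffs.
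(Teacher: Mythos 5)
Your proof is correct, and although it shares the paper's overall skeleton --- pair a bounded truncation of $w=u-v$ against the monotone field $Z=|du|^{p-2}du-|dv|^{p-2}dv$, make the leading term nonnegative and $\geq c_p|dw|^p$ via a monotonicity inequality, and use $p$-parabolicity as a global divergence theorem --- your implementation differs at all three technical junctures, in each case legitimately. First, the truncation: the paper composes with the $C^1$, piecewise-$C^2$ function $h_T$ (quadratic on $\{r<T\}$, linear outside), so its field $X_T$ is only weakly differentiable; this forces an appeal to Sard's theorem to compute $\operatorname{div}X_T$ for a.e.\ $T$, and the negative contribution from $\{|u-v-C|>T\}$ is controlled only by $2\int_{\{|u-v-C|>T\}}(|du|^p+|dv|^p)$, whence the paper's subsequence $T_n\nearrow\infty$ making $(\operatorname{div}X_{T_n})_-$ of mass $<1/n$. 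Your smooth truncation $f_L=w/\sqrt{1+|w|^2/L^2}$ keeps $F_L$ of class $C^1$ (you rightly flag that $\xi\mapsto|\xi|^{p-2}\xi$ is $C^1$ precisely because $p\geq 2$), and your cross term is dominated uniformly in $L$ by $|Z|\,|dw|\in L^1(M)$ while vanishing pointwise, so plain dominated/monotone convergence in $L$ replaces both Sard and the subsequence trick --- a genuinely cleaner limit mechanism. Second, the parabolicity input: the paper quotes the Gol'dshtein--Troyanov Kelvin--Nevanlinna--Royden criterion as a black box, whereas you reprove the needed direction by hand from the classical null-$p$-capacity cutoff characterization plus H\"older and $\|F_L\|_{p'}\leq L\|Z\|_{p'}$; this is more self-contained (the capacity characterization is indeed the engine behind the KNR criterion), but note your route needs $\operatorname{div}F_L\in L^1(M)$ in full, which your uniform domination supplies, while the KNR criterion tolerates a non-integrable positive part --- exactly the slack the paper uses, since it only knows $(\operatorname{div}X_T)_-\in L^1$. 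Third, you replace the paper's Lindqvist-based Corollary (constant $\tfrac{2}{p(2^{p-1}-1)}$) with the standard monotonicity inequality with constant $2^{2-p}$; only positivity of the constant matters, so either works. Two trivial slips: your cross-term bound should be $t(1+t)^{-3/2}|Z|\,|dw|$ rather than $\tfrac12 t(1+t)^{-3/2}|Z|\,|dw|$ (immaterial, as both are $\leq |Z|\,|dw|$), and your conclusion correctly places the constant $A$ in $\mathbb{R}^n$, incidentally fixing the typo $A\in\mathbb{R}$ in the statement.
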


\subsection*{Acknowledgement}
The authors express their gratitude to A.G. Setti for useful discussions during the preparation of this paper.

\section{Main tools}

In the proofs of Theorems \ref{th_comparison} and \ref{th_rn} we will make an
essential use of two main ingredients: (a) a version for the $p$-Laplacian of a
classical inequality for the mean-curvature
operator; (b) a parabolicity criterion which, in a sense, can be considered as
a global form of the divergence theorem in non-compact settings.

\subsection{A key inequality}

The following basic inequality was discovered by Lindqvist, \cite{L}.

\begin{lemma}
Let $\left(  V,\left\langle ,\right\rangle \right)  $ be a finite dimensional,
real vector space endowed with a positive definite scalar product. Let
$p\geq2$. Then, for every $x,y\in V$ it holds%
\[
\left\vert x\right\vert ^{p}+\left(  p-1\right)  \left\vert y\right\vert
^{p}\geq p\left\vert y\right\vert ^{p-2}\left\langle x,y\right\rangle
+\frac{1}{2^{p-1}-1}\left\vert x-y\right\vert ^{p}.
\]

\end{lemma}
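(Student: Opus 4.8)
The plan is to rewrite the claim as a quantitative (uniform) convexity estimate for the function $f(z)=\left\vert z\right\vert^{p}$ and then to reduce the dimension. Moving the term $p\left\vert y\right\vert^{p-2}\left\langle x,y\right\rangle$ to the left, the asserted inequality reads
\[
D(x,y):=\left\vert x\right\vert^{p}+(p-1)\left\vert y\right\vert^{p}-p\left\vert y\right\vert^{p-2}\left\langle x,y\right\rangle\geq\frac{1}{2^{p-1}-1}\left\vert x-y\right\vert^{p},
\]
where $D(x,y)=f(x)-f(y)-\left\langle\nabla f(y),x-y\right\rangle\geq0$ is the Bregman divergence of the convex map $f$; the content is thus to improve the trivial bound $D\geq0$. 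Both sides are homogeneous of degree $p$ in $(x,y)$, so I would first dispose of the case $y=0$ (where the inequality is just $2^{p-1}-1\geq1$) and then normalize $\left\vert y\right\vert=1$.

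With $\left\vert y\right\vert=1$ all the quantities involved depend only on $s=\left\vert x\right\vert$ and on the scalar $r=\left\langle x,y\right\rangle\in[-s,s]$, since $\left\vert x-y\right\vert^{2}=s^{2}-2r+1$. The decisive observation is that, for fixed $s$, the function
\[
\Phi(r):=s^{p}+(p-1)-pr-\frac{1}{2^{p-1}-1}\left(s^{2}-2r+1\right)^{p/2}
\]
is concave in $r$: indeed $\Phi''(r)=-\frac{p(p-2)}{2^{p-1}-1}(s^{2}-2r+1)^{(p-4)/2}\leq0$ for $p\geq2$. A concave function on $[-s,s]$ attains its minimum at an endpoint, so it suffices to treat $r=\pm s$, i.e. the case in which $x$ and $y$ are collinear. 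Letting $s$ vary over $[0,+\infty)$, this collapses the inequality to the one-dimensional statement: for every $\xi\in\mathbb{R}$,
\[
g(\xi):=\left\vert\xi\right\vert^{p}+(p-1)-p\xi-\frac{1}{2^{p-1}-1}\left\vert\xi-1\right\vert^{p}\geq0.
\]

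Finally I would settle this scalar inequality by elementary calculus. One checks $g(1)=g'(1)=0$, so $\xi=1$ is a critical point at which equality holds, and $g(\xi)\to+\infty$ as $\xi\to\pm\infty$ because the coefficient $1$ of $\left\vert\xi\right\vert^{p}$ strictly exceeds $\frac{1}{2^{p-1}-1}$. On $[1,+\infty)$ a direct computation gives $g''>0$, so $g$ is convex there and the double zero at $\xi=1$ forces $g\geq0$. The genuinely delicate range is $\xi<1$: here $g''$ changes sign, $g$ is not convex, and no single tangent-line argument applies; I expect this to be the main obstacle. The cleanest way around it is to study $g'$ directly, writing $g'(\xi)/p=\left\vert\xi\right\vert^{p-2}\xi-1+c(1-\xi)^{p-1}$ with $c=1/(2^{p-1}-1)$, and to prove $g'\leq0$ on $(-\infty,1)$ so that $\xi=1$ is the global minimum. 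Since the function $\left\vert\xi\right\vert^{p-2}\xi+c(1-\xi)^{p-1}$ is increasing near $\xi=1$ but not monotone on all of $(-\infty,1)$, this last step reduces to verifying that it stays below its value $1$ at $\xi=1$, i.e. to checking one inequality at its unique interior local maximum — a single concrete estimate in the parameter $p$. This is precisely where the constant $1/(2^{p-1}-1)$ turns out to be exactly what is needed, the whole inequality degenerating to an identity when $p=2$.
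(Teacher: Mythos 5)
Your argument is correct, but note that the paper does not prove this lemma at all: it is quoted verbatim from Lindqvist \cite{L}, so any proof is necessarily a different route from the paper's (nonexistent) one. Your reduction is sound at every step I can check: homogeneity plus the case $y=0$ legitimately reduce to $|y|=1$; the concavity of $\Phi$ in $r=\langle x,y\rangle$ (valid since $p\geq 2$ makes $\Phi''\leq 0$ wherever $s^{2}-2r+1>0$, which suffices by continuity) correctly pushes the minimum to the collinear endpoints $r=\pm s$; and the convexity of $g$ on $[1,+\infty)$ together with the double zero at $\xi=1$ handles that range. The one step you flagged but did not carry out does close, and with exactly the claimed sharpness: writing $c=1/(2^{p-1}-1)$ and $\gamma=c^{1/(p-2)}$ for $p>2$, the function $h(\xi)=|\xi|^{p-2}\xi+c(1-\xi)^{p-1}$ has its unique local maximum on $(-\infty,1)$ at $\xi_{-}=-\gamma/(1-\gamma)$, where $1-\xi_{-}=1/(1-\gamma)$ and $\gamma^{p-1}=c\gamma$ give
\[
h(\xi_{-})=\frac{c-\gamma^{p-1}}{(1-\gamma)^{p-1}}=c\,(1-\gamma)^{2-p}\leq 1
\iff \gamma\leq\tfrac{1}{2}
\iff 2^{p-2}\leq 2^{p-1}-1,
\]
which holds precisely for $p\geq 2$; hence $h\leq 1$ on $(-\infty,1)$ (its value at $0$ is $c$, at $1^{-}$ it tends to $1$, and it tends to $-\infty$ at $-\infty$), so $g'\leq 0$ there and $g\geq g(1)=0$. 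For comparison, Lindqvist's original proof is structurally different: it iterates the midpoint convexity inequality for $|\cdot|^{p}$ together with a Clarkson-type estimate, the constant $1/(2^{p-1}-1)$ emerging as the sum of the geometric series $\sum_{k\geq 1}2^{-k(p-1)}$. His route stays coordinate-free and explains the constant's provenance; yours buys an elementary, self-contained calculus proof and exhibits exactly where the constant is used (the inequality $\gamma\leq 1/2$ degenerating to equality, and $g$ to the parallelogram identity, at $p=2$). Two cosmetic slips, neither damaging: the claim $g(\xi)\to+\infty$ as $\xi\to\pm\infty$ fails at $p=2$ where $1=c$ (but there $g\equiv 0$ anyway), and your $\gamma$ is undefined at $p=2$, so that borderline case should be dispatched first as the identity $|x|^{2}+|y|^{2}-2\langle x,y\rangle=|x-y|^{2}$.
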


As consequence, we deduce the validity of the next

\begin{corollary}
\label{coro_lind} In the above assumptions, for every $x,y\in V$, it holds%
\begin{equation}
\left\langle \left\vert x\right\vert ^{p-2}x-\left\vert y\right\vert
^{p-2}y,x-y\right\rangle \geq\frac{2}{p\left(  2^{p-1}-1\right)  }\left\vert
x-y\right\vert ^{p}. \label{MHCK}%
\end{equation}

\end{corollary}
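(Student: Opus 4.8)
The plan is to derive Corollary \ref{coro_lind} directly from the preceding inequality of Lindqvist by exploiting the symmetry of the desired statement. The target inequality \eqref{MHCK} is symmetric under interchanging $x$ and $y$: the left-hand side $\left\langle \left\vert x\right\vert ^{p-2}x-\left\vert y\right\vert ^{p-2}y,x-y\right\rangle$ is manifestly invariant under swapping $x\leftrightarrow y$, and so is $\left\vert x-y\right\vert ^{p}$. By contrast, Lindqvist's inequality is not symmetric. This suggests the main idea: write Lindqvist's inequality once as stated and once with the roles of $x$ and $y$ exchanged, then add the two.

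First I would record Lindqvist's inequality in the form
\[
\left\vert x\right\vert ^{p}+\left(  p-1\right)  \left\vert y\right\vert
^{p}-p\left\vert y\right\vert ^{p-2}\left\langle x,y\right\rangle
\geq\frac{1}{2^{p-1}-1}\left\vert x-y\right\vert ^{p}.
\]
Then I would apply the same lemma after swapping $x$ and $y$, noting that $\left\vert y-x\right\vert =\left\vert x-y\right\vert$, to obtain
\[
\left\vert y\right\vert ^{p}+\left(  p-1\right)  \left\vert x\right\vert
^{p}-p\left\vert x\right\vert ^{p-2}\left\langle y,x\right\rangle
\geq\frac{1}{2^{p-1}-1}\left\vert x-y\right\vert ^{p}.
\]
Adding these two inequalities, the left-hand side collapses nicely: the terms $\left\vert x\right\vert ^{p}$ and $\left(p-1\right)\left\vert x\right\vert^{p}$ combine to $p\left\vert x\right\vert ^{p}$, and symmetrically the $y$-terms give $p\left\vert y\right\vert ^{p}$, while the scalar-product terms assemble into $-p\left\langle \left\vert x\right\vert ^{p-2}x+\left\vert y\right\vert ^{p-2}y,\cdot\right\rangle$ type contributions. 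The right-hand side becomes $\frac{2}{2^{p-1}-1}\left\vert x-y\right\vert ^{p}$.

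The remaining step is purely algebraic: recognizing that the summed left-hand side equals $p\left\langle \left\vert x\right\vert ^{p-2}x-\left\vert y\right\vert ^{p-2}y,x-y\right\rangle$. Indeed, expanding this inner product gives $p\left(\left\vert x\right\vert ^{p}-\left\vert x\right\vert ^{p-2}\left\langle x,y\right\rangle -\left\vert y\right\vert ^{p-2}\left\langle y,x\right\rangle +\left\vert y\right\vert ^{p}\right)$, which matches the assembled terms exactly. Dividing both sides by $p$ then yields precisely \eqref{MHCK} with the constant $\frac{2}{p\left(2^{p-1}-1\right)}$.

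I do not expect a genuine obstacle here, since the whole argument is a symmetrization trick followed by bookkeeping. The only point requiring mild care is verifying that the cross terms recombine correctly; in particular one must use $\left\langle x,y\right\rangle =\left\langle y,x\right\rangle$ from symmetry of the scalar product to merge $\left\vert y\right\vert ^{p-2}\left\langle x,y\right\rangle$ and $\left\vert x\right\vert ^{p-2}\left\langle y,x\right\rangle$ into the single inner product $\left\langle \left\vert x\right\vert ^{p-2}x+\left\vert y\right\vert ^{p-2}y,x\right\rangle$ and its counterpart, so that the final expression factors as the stated inner product of differences. This is the kind of step that is trivial once seen but easy to mis-sign, so I would expand it explicitly rather than by inspection.
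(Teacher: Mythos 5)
Your proof is correct and is essentially the same as the paper's: both apply Lindqvist's inequality twice with the roles of $x$ and $y$ interchanged, add the two instances, and identify the resulting left-hand side with $p\left\langle \left\vert x\right\vert ^{p-2}x-\left\vert y\right\vert ^{p-2}y,x-y\right\rangle$ via the expansion $\left\vert x\right\vert ^{p}+\left\vert y\right\vert ^{p}-\left(\left\vert x\right\vert ^{p-2}+\left\vert y\right\vert ^{p-2}\right)\left\langle x,y\right\rangle$. The algebra in your symmetrization and factorization step checks out exactly as in the paper.
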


\begin{proof}
We start computing
\[
\left\langle \left\vert x\right\vert ^{p-2}x -\left\vert y\right\vert ^{p-2}y    ,x-y\right\rangle
=\left\vert x\right\vert ^{p}+\left\vert y\right\vert ^{p}-\left\langle
x,y\right\rangle \left(  \left\vert x\right\vert ^{p-2}+\left\vert
y\right\vert ^{p-2}\right)  .
\]
On the other hand, applying twice Lindqvist inequality with the role of $x$
and $y$ interchanged we get%
\[
p\left(  \left\vert x\right\vert ^{p}+\left\vert y\right\vert ^{p}\right)
\geq p\left(  \left\vert x\right\vert ^{p-2}+\left\vert y\right\vert
^{p-2}\right)  \left\langle x,y\right\rangle +\frac{2}{\left(  2^{p-1}%
-1\right)  }\left\vert x-y\right\vert ^{p}.
\]
Inserting into the above completes the proof.
\end{proof}

\begin{remark}
\rm{Inequality (\ref{MHCK}) can be considered as a version for the $p$-Laplacian
of the classical Mikljukov-Hwang-Collin-Krust inequality; \cite{M}, \cite{Hw},
\cite{CK}. This latter states that, for every $x,y\in V$,%
\[
\left\langle \tfrac{x}{\sqrt{1+\left\vert x\right\vert ^{2}}}-\tfrac{y}%
{\sqrt{1+\left\vert y\right\vert ^{2}}},x-y\right\rangle \geq\tfrac
{\sqrt{1+\left\vert x\right\vert ^{2}}+\sqrt{1+\left\vert y\right\vert ^{2}}%
}{2}\left\vert \tfrac{x}{\sqrt{1+\left\vert x\right\vert ^{2}}}-\tfrac
{y}{\sqrt{1+\left\vert y\right\vert ^{2}}}\right\vert ^{2},
\]
equality holding if and only if $x=y$.}
\end{remark}

\subsection{The Kelvin-Nevanlinna-Royden criterion}

There is a very useful characterization of (non) $p$-parabolicity in terms of
special vector fields on the underlying manifold. It goes under the name of
Kelvin-Nevanlinna-Royden criterion. In the linear setting $p=2$ it was proved
in a paper by Lyons and Sullivan, \cite{LS}. See also \cite{PRS-Progress}. The
following non-linear extension is due to Gol'dshtein and Troyanov, \cite{GT}.

\begin{theorem}
\label{th_gt}The manifold $M$ is not $p$-parabolic if and only if there exists
a vector field $X$ on $M$ such that:\medskip

(a) $\left\vert X\right\vert \in L^{\frac{p}{p-1}}\left(  M\right)  $\medskip

(b) $\operatorname{div}X\in L_{loc}^{1}\left(  M\right)  $ and $\min\left(
\operatorname{div}X,0\right)  =\left(  \operatorname{div}X\right)  _{-}\in
L^{1}\left(  M\right)  $\medskip

(c) $0<\int_{M}\operatorname{div}X\leq+\infty$.
\end{theorem}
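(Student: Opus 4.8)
The plan is to reformulate $p$-parabolicity in a form amenable to an integration-by-parts/duality argument and then prove the two implications separately. Recall that $M$ is $p$-parabolic if and only if $\mathrm{cap}_p(K,M)=0$ for every (equivalently, some) compact $K$ with non-empty interior; equivalently, there is a sequence of cut-off functions $\varphi_k\in C_c^\infty(M)$ with $0\le\varphi_k\le1$, $\varphi_k\nearrow1$ on compacta, and $\int_M|\nabla\varphi_k|^p\to0$. I would use the cut-off characterization for the implication ``(a)--(c) $\Rightarrow$ not $p$-parabolic'' and the capacity potential for the converse. Throughout, $p'=p/(p-1)$ is the conjugate exponent, so (a) reads $\int_M|X|^{p'}<\infty$, and the algebraic identity $(|\nabla u|^{p-1})^{p'}=|\nabla u|^p$ makes finite $p$-energy of a potential equivalent to $L^{p'}$-integrability of its associated field.

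For the implication ``(a)--(c) hold $\Rightarrow M$ is not $p$-parabolic'' I argue by contraposition: assuming $M$ is $p$-parabolic and that $X$ satisfies (a) and (b), I show $\int_M\operatorname{div}X\le0$, contradicting (c). Picking parabolicity cut-offs $\varphi_k$ as above, compact support gives $\int_M\varphi_k\operatorname{div}X=-\int_M\langle\nabla\varphi_k,X\rangle$. By Hölder the right-hand side is bounded by $\left(\int_M|\nabla\varphi_k|^p\right)^{1/p}\|X\|_{L^{p'}(M)}$, which tends to $0$ by (a) and the defining property of the $\varphi_k$. On the left I split $\operatorname{div}X=(\operatorname{div}X)_+-(\operatorname{div}X)_-$: monotone convergence gives $\int\varphi_k(\operatorname{div}X)_+\to\int(\operatorname{div}X)_+$, while dominated convergence and (b) give $\int\varphi_k(\operatorname{div}X)_-\to\int(\operatorname{div}X)_-<\infty$. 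Comparing the two limits forces $\int_M(\operatorname{div}X)_+=\int_M(\operatorname{div}X)_-<\infty$, hence $\int_M\operatorname{div}X=0$, incompatible with (c).

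For the converse, ``$M$ not $p$-parabolic $\Rightarrow$ existence of $X$'', I build the field from a $p$-capacity potential. Fix a relatively compact open $\Omega$ with smooth boundary and $\mathrm{cap}_p(\bar\Omega,M)>0$, take a smooth exhaustion $\bar\Omega\subset\Omega_1\subset\Omega_2\subset\cdots$, and let $u_k$ solve $\Delta_pu_k=0$ on $\Omega_k\setminus\bar\Omega$ with $u_k=1$ on $\partial\Omega$ and $u_k=0$ on $\partial\Omega_k$. The maximum principle gives $0\le u_k\le1$ and $u_k\nearrow u$, where $u$ is $p$-harmonic on $M\setminus\bar\Omega$ with $\int_{M\setminus\bar\Omega}|\nabla u|^p=\mathrm{cap}_p(\bar\Omega,M)$. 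I extend $u\equiv1$ on $\Omega$ and set $X=-|\nabla u|^{p-2}\nabla u$ on $M\setminus\bar\Omega$, extended to a Lipschitz field on $\bar\Omega$ matching its boundary values on $\partial\Omega$. Then $|X|=|\nabla u|^{p-1}$ outside $\bar\Omega$, so $\int_M|X|^{p'}=\mathrm{cap}_p(\bar\Omega,M)+(\text{bounded})<\infty$, giving (a); also $\operatorname{div}X=-\Delta_pu=0$ on $M\setminus\bar\Omega$ and is bounded with compact support on $\bar\Omega$, giving (b). Finally the divergence theorem on $\Omega$ yields $\int_M\operatorname{div}X=\int_{\partial\Omega}\langle X,\nu\rangle=-\int_{\partial\Omega}|\nabla u|^{p-2}\partial_\nu u$, and the energy identity $\int_{\Omega_k\setminus\bar\Omega}|\nabla u_k|^p=-\int_{\partial\Omega}|\nabla u_k|^{p-2}\partial_\nu u_k$, passed to the limit $k\to\infty$, shows this flux equals $\mathrm{cap}_p(\bar\Omega,M)>0$, which is (c).

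The main obstacle is the analytic foundation of the capacity-potential construction. Solutions of $\Delta_pu=0$ are in general only $C^{1,\alpha}$, not $C^2$, so I must justify the Gauss--Green manipulations and the boundary-flux identity in the weak sense appropriate to continuous fields with distributional divergence, and verify the interior $C^1$-convergence $u_k\to u$ near $\partial\Omega$ needed to pass the flux to the limit; this is precisely where finite $p$-energy substitutes for any decay-at-infinity hypothesis. A secondary delicate point is the monotone/dominated convergence step in the first implication, which forces the cut-offs $\varphi_k$ to be chosen increasing and is exactly where the one-sided hypothesis $(\operatorname{div}X)_-\in L^1$ enters.
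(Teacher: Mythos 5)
The paper offers no proof of Theorem \ref{th_gt} to compare against: the statement is imported verbatim from Gol'dshtein--Troyanov \cite{GT} (with \cite{LS} for the linear case $p=2$). Your argument is, in substance, the standard proof from that literature --- the cut-off/H\"older duality computation for ``(a)--(c) $\Rightarrow$ not $p$-parabolic'' and the equilibrium $p$-capacity potential for the converse --- and both halves are sound in outline. One caveat on your starting point: the equivalence of $p$-parabolicity with $\mathrm{cap}_p(K,M)=0$ is itself a nontrivial theorem relative to the Liouville-type definition adopted in this paper (constancy of bounded-above $p$-subharmonic functions), so it must be invoked as a cited result (it is proved in \cite{GT} itself) rather than merely ``recalled''; your whole proof runs through the capacity picture.

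Two repairs are worth making explicit. First, the monotone-convergence step is an unnecessary complication: producing cut-offs that are simultaneously increasing and $p$-energy-null takes an extra argument (e.g.\ using the monotonicity in $k$ of the equilibrium potentials of an exhaustion), whereas it is not needed at all --- since $0\le\varphi_k\le1$ and $\varphi_k\to1$ pointwise, Fatou applied to $\varphi_k(\operatorname{div}X)_+\ge0$, combined with dominated convergence on the $\varphi_k(\operatorname{div}X)_-$ term (this is exactly where hypothesis (b) enters), already yields $\int_M(\operatorname{div}X)_+\le\int_M(\operatorname{div}X)_-$, hence $\int_M\operatorname{div}X\le0$, contradicting (c). Second, in the converse direction your ``Lipschitz extension'' of $X$ into $\bar\Omega$ overstates what is available: on $M\setminus\bar\Omega$ the field $-|\nabla u|^{p-2}\nabla u$ is only $C^{0,\alpha}$ up to $\partial\Omega$ (boundary regularity of Lieberman type for the $p$-Laplacian), so you should ask only for a continuous extension with divergence in $L^1(\Omega)$ and matching normal trace on $\partial\Omega$ --- the matching trace is what prevents a singular surface contribution to $\operatorname{div}X$, and continuity plus $L^1$ divergence suffices for the Gauss--Green identity you use. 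The flux identity $\int_{\partial\Omega}|\nabla u|^{p-2}(-\partial_\nu u)=\mathrm{cap}_p(\bar\Omega,M)>0$ then follows from the energy identity for the $u_k$ together with uniform boundary $C^{1,\alpha}$ estimates allowing $C^1$-convergence up to $\partial\Omega$, as you correctly flag. With these standard repairs the proposal is a correct proof of the criterion.
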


In particular, if $M$ is $p$-parabolic and $X$ is a vector field satisfying
(a) $\left\vert X\right\vert \in L^{\frac{p}{p-1}}\left(  M\right)  $, (b)
$\operatorname{div}X\in L_{loc}^{1}\left(  M\right)  $, and (c)
$\operatorname{div}X\geq0$ on $M$, then we must necessarily conclude that
$\operatorname{div}X=0$ on $M$.

\section{Proofs of the comparison principles}

We are now in the position to prove the main results.

\begin{proof}
[Proof (of Theorem \ref{th_comparison})]Fix any \thinspace$x_{0}\in M$, let
$A=u\left(  x_{0}\right)  -v\left(  x_{0}\right)  $ and define $\Omega_{A}$ to
be the connected component of the open set%
\[
\left\{  x\in M:A-1<u\left(  x_{0}\right)  -v\left(  x_{0}\right)
<A+1\right\}
\]
which contains $x_{0}$. By standard topological arguments, $\Omega_{A}%
\neq\emptyset$ is a (connected) open set. Let $\alpha:\mathbb{R}%
\rightarrow\mathbb{R}_{\geq0}$ be the piece-wise linear function defined by%
\[
\alpha\left(  t\right)  =\left\{
\begin{array}
[c]{ll}%
0 & t\leq A-1\\
\left(  t-A+1\right)  /2 & A-1\leq t\leq A+1\\
1 & t\geq A+1.
\end{array}
\right.
\]
Consider the vector field%
\[
X=\alpha\left(  u-v\right)  \left\{  \left\vert \nabla u\right\vert
^{p-2}\nabla u-\left\vert \nabla v\right\vert ^{p-2}\nabla v\right\}  .
\]
A direct computation gives%
\begin{align*}
\operatorname{div}X  &  =\alpha^{\prime}\left(  u-v\right)  \left\langle
\left\vert \nabla u\right\vert ^{p-2}\nabla u-\left\vert \nabla v\right\vert
^{p-2}\nabla v,\nabla u-\nabla v\right\rangle \\
&  +\alpha\left(  u-v\right)  \left(  \Delta_{p}u-\Delta_{p}v\right) \\
&  \geq\frac{2}{2^{p-1}-1}\alpha^{\prime}\left(  u-v\right)  \left\vert \nabla
u-\nabla v\right\vert ^{p},
\end{align*}
where in the last inequality we have used Corollary \ref{coro_lind}, the fact
that $\alpha,\alpha^{\prime}\geq0$ and the assumption $\Delta_{p}u-\Delta
_{p}v\geq0$. Therefore%
\[
\operatorname{div}X\geq0,\text{ on }M,
\]
the equality holding if and only if%
\[
\alpha^{\prime}\left(  u-v\right)  \left\vert \nabla u-\nabla v\right\vert
=0.
\]
On the other hand, for a suitable constant $C>0$,%
\[
\left\vert X\right\vert ^{\frac{p}{p-1}}\leq C\left(  \left\vert \nabla
u\right\vert ^{p}+\left\vert \nabla v\right\vert ^{p}\right)  \in L^{1}\left(
M\right)  .
\]
Therefore, Theorem \ref{th_gt} yields%
\[
\operatorname{div}X=0\text{ on }M.
\]
Since $\alpha^{\prime}\left(  u-v\right)  \neq0$ on $\Omega_{A}$, we deduce%
\[
u-v\equiv A\text{, on }\Omega_{A}.
\]
It follows that the open set $\Omega_{A}$ is also closed. Since $M$ is
connected we must conclude that $\Omega_{A}=M$ and $u-v=A$ on $M$.
\end{proof}

\begin{proof}
[Proof (of Theorem \ref{th_rn})]We suppose that either $u$ or $v$ is non-constant, for otherwise there's nothing to prove. Fix $q_{0}\in M$. Set $C:=u(q_{0})-v(q_{0}%
)\in\mathbb{R}^{n}$ and introduce the radial function $r:\mathbb{R}%
^{n}\rightarrow\mathbb{R}$ defined as $r(x)=|x-C|$. For $T>0$, consider the
piecewise differentiable vector field $X_{T}$ defined as
\[
X_{T}:=\left[  \left.  dh_{T}\right\vert _{(u-v)}\circ\left(  |du|^{p-2}%
du-|dv|^{p-2}dv\right)  \right]  ^{\sharp}%
\]
where $h_{T}\in C^{1}(\mathbb{R}^{n},\mathbb{R})$ is the function
\[
h_{T}(x):=\left\{
\begin{array}
[c]{ll}%
\frac{r^{2}(x)}{2} & \text{if }r(x)<T\\
Tr(x)-\frac{T^{2}}{2} & \text{if }r(x)\geq T.
\end{array}
\right.
\]
We observe that $h_{T}\in C^{2}$ where $r(x)\neq T$ and that $X_{T}$ is well
defined since there exists a canonical identification
\[
T_{(u-v)(q)}\mathbb{R}^{n}\cong T_{u(q)}\mathbb{R}^{n}\cong T_{v(q)}%
\mathbb{R}^{n}\cong\mathbb{R}^{n}.
\]
 Wa also observe that, by Sard theorem, for a.e.
$T>0$, the level set $\left\{  |u-v-C|=T\right\}  $ is a smooth (possibly
empty) hypersurface, hence a set of measure zero. Thus, the vector field $X_T$ is
weakly differentiable and, for a.e. $T>0$, the weak divergence of $X_T$ is given by 
\begin{align*}
\operatorname{div}X_{T} &=d(\tfrac{r^2}{2})|_{(u-v)}\circ(\Delta_{p}u-\Delta
_{p}v) \\
&+ {}^{M}\operatorname{tr}\left(  \operatorname{Hess}(\tfrac{r^2}{2})|_{(u-v)}\left(
du-dv,|du|^{p-2}du-|dv|^{p-2}dv\right)  \right),
\end{align*}
if  $r(x)<T$, and
\begin{align*}
\operatorname{div}X_{T} &= d(Tr)|_{(u-v)}\circ(\Delta_{p}u-\Delta_{p}v) \nonumber\\
&+ {}^{M}\operatorname{tr}\left(  \operatorname{Hess}(Tr)|_{(u-v)}\left(du-dv,|du|^{p-2}du-|dv|^{p-2}dv\right)  \right).
\end{align*}
if  $r(x) \geq T$. The first term on the RHS vanishes by assumption. Moreover, by standard
computations, we have $\operatorname{Hess}(r)=r^{-1}(\left\langle
,\right\rangle _{\mathbb{R}^{n}}-dr\otimes dr)$ on $\mathbb{R}^{n}%
\setminus\left\{  C\right\}  $. Thus,
\[
\begin{array}
[c]{ll}%
\operatorname{Hess}(\frac{r^2}{2})=dr\otimes dr+r\operatorname{Hess}(r)=\left\langle ,\right\rangle
_{\mathbb{R}^{n}} & \text{if }r(x)<T,\\
\operatorname{Hess}(Tr)=T\operatorname{Hess}(r)=\frac{T}{r}(\left\langle ,\right\rangle _{\mathbb{R}%
^{n}}-dr\otimes dr\rangle) & \text{if }r(x)\geq T.
\end{array}
\]
As a consequence, for $q\in M$ such that $r((u-v)(q))<T$, by Corollary
\ref{coro_lind} we get
\begin{equation}
\operatorname{div}X_{T}=\left\langle du-dv,|du|^{p-2}du-|dv|^{p-2}%
dv\right\rangle \geq\frac{|du-dv|^{p}}{p(2^{p-1}-1)},\label{dive_r2}%
\end{equation}
while, for $q\in M$ such that $r((u-v)(q))\geq T$, it holds
\begin{align}
\operatorname{div}X_{T} &  =\frac{T}{r(u-v)}\left\langle du-dv,|du|^{p-2}%
du-|dv|^{p-2}dv\right\rangle \label{dive_r}\\
&  -\frac{T}{r(u-v)}\left\langle dr|_{(u-v)}(du-dv),dr|_{(u-v)}(|du|^{p-2}%
du-|dv|^{p-2}dv)\right\rangle \nonumber\\
&  \geq\frac{T}{r(u-v)}\frac{|du-dv|^{p}}{p(2^{p-1}-1)}-(|du|+|dv|)(|du|^{p-1}%
+|dv|^{p-1})\nonumber\\
&  \geq\frac{T}{r(u-v)}\frac{|du-dv|^{p}}{p(2^{p-1}-1)}-(|du|^{p}%
+|dv|^{p}+|du|^{p-1}|dv|+|dv|^{p-1}|du|)\nonumber\\
&  \geq\frac{T}{r(u-v)}\frac{|du-dv|^{p}}{p(2^{p-1}-1)}-2(|du|^{p}%
+|dv|^{p}),\nonumber
\end{align}
where we have used again Corollary \ref{coro_lind} for the first term and
Cauchy-Schwarz inequality, Young's inequality and the facts that $|dr|=1$ and
$r(u-v)\geq T$ for the second one. Let us now compute the $L^{\frac{p}{p-1}}%
$-norm of $X_{T}.$ Since
\[
\left\vert |du|^{p-2}du-|dv|^{p-2}dv\right\vert ^{\frac{p}{p-1}}\leq\left(
|du|^{p-1}+|dv|^{p-1}\right)  ^{\frac{p}{p-1}}\leq2^{\frac{1}{p-1}}\left(
|du|^{p}+|dv|^{p}\right)  ,
\]
we have
\begin{align*}
\int_{\left\{  |u-v-C|<T\right\}  }|X_{T}|^{\frac{p}{p-1}} &  \leq
\int_{\left\{  |u-v-C|)<T\right\}  }|u-v-C|^{\frac{p}{p-1}}\left\vert
|du|^{p-2}du-|dv|^{p-2}dv\right\vert ^{\frac{p}{p-1}}\\
&  \leq T^{\frac{p}{p-1}}2^{\frac{1}{p-1}}\left(  \left\Vert du\right\Vert
_{p}^{p}+\left\Vert dv\right\Vert _{p}^{p}\right)  <+\infty
\end{align*}
and
\begin{align*}
\int_{\left\{  |u-v-C|)>T\right\}  }|X_{T}|^{\frac{p}{p-1}} &  \leq
\int_{\left\{  |u-v-C|>T\right\}  }T^{\frac{p}{p-1}}\left\vert |du|^{p-2}%
du-|dv|^{p-2}dv\right\vert ^{\frac{p}{p-1}}\\
&  \leq T^{\frac{p}{p-1}}2^{\frac{1}{p-1}}\left(  \left\Vert du\right\Vert
_{p}^{p}+\left\Vert dv\right\Vert _{p}^{p}\right)  <+\infty.
\end{align*}
Hence $X_{T}$ is a weakly differentiable vector field with $|X_{T}|\in
L^{\frac{p}{p-1}}(M)$ and $\operatorname{div}X_{T}\in L_{loc}^{1}(M)$. To
apply Theorem \ref{th_gt}, it remains to show that $(\operatorname{div}%
X_{T})_{-}\in L^{1}(M)$. By inequalities (\ref{dive_r2}) and (\ref{dive_r}),
we deduce that
\begin{equation}
\int_{M}\left\vert \left(  \operatorname{div}X_{T}\right)  _{-}\right\vert
\leq2\int_{\left\{  |u-v-C|>T\right\}  }(|du|^{p}+|dv|^{p})\leq2(\left\Vert
du\right\Vert _{p}^{p}+\left\Vert dv\right\Vert _{p}^{p})<+\infty
.\label{div_-}%
\end{equation}
Then, the assumptions of Theorem \ref{th_gt} are satisfied and we get, for
\ a.e. $T>0$,
\[
\int_{M}\operatorname{div}X_{T}\leq0.
\]
According to (\ref{div_-}) we now choose a sequence $T_{n}\nearrow+\infty$
such that
\[
\int_{M}\left\vert \left(  \operatorname{div}X_{T_{n}}\right)  _{-}\right\vert
\leq2\int_{\left\{  |u-v-C|>T_{n}\right\}  }(|du|^{p}+|dv|^{p})<\frac{1}{n}.
\]
As a consequence,%
\begin{align}
\int_{\left\{  |u-v-C|<T_{n}\right\}  }\frac{|du-dv|^{p}}{p(2^{p-1}-1)} &
\leq\int_{\left\{  |u-v-C|<T_{n}\right\}  }\left(  \operatorname{div}X_{T_{n}%
}\right)  _{+}\label{div_eps}\\
&  \leq\int_{M}\left(  \operatorname{div}X_{T_{n}}\right)  _{+}\nonumber\\
&  \leq-\int_{M}\left(  \operatorname{div}X_{T_{n}}\right)  _{-}<\frac{1}%
{n}.\nonumber
\end{align}
Therefore, letting  $n$ go to $+\infty$,
we obtain
\[
\int_{M}\frac{|d(u-v)|^{p}}{p(2^{p-1}-1)}=0,
\]
that is, $u-v\equiv u(q_{0})-v(q_{0})=C$ on $M$.
\end{proof}

\bigskip

\end{document}